\title{Fixed Points for \'Ciri\'c-$G$-Contractions in Uniform Spaces Endowed with a Graph\\[0.3cm]}
\author{{Aris Aghanians$^1$,\,\,\,Kamal Fallahi$^1$,\,\,\,Kourosh Nourouzi$^1$\thanks{Corresponding
author} \thanks{e-mail: nourouzi@kntu.ac.ir; fax: +98 21
22853650},\,\,\,Ram U Verma$^2$}\\[0.4cm]
{\em $^1$Department of Mathematics, K. N. Toosi University of Technology,}\\
{\em P.O. Box 16315-1618, Tehran, Iran.}\\[0.4cm]
\and
{\em $^{2}$ International Publications USA,}\\
{\em 3400 S Brahma Blvd Suite 31B, Kingsville, TX 78363 - 7342, USA}\\}
\newenvironment{proof}{\noindent{\em{Proof.}}}{$\hfill\square$
\medskip}
\newtheorem{defn}{Definition}
\newtheorem{cor}{Corollary}
\newtheorem{exm}{Example}
\newtheorem{thm}{Theorem}
\newtheorem{prop}{Proposition}
\newtheorem{rem}{Remark}
\newtheorem{lem}{Lemma}
\newcommand{\fix}{{\rm Fix}}
\begin{document}
\maketitle \begin{abstract} In this paper, we generalize the
notion  of $\lambda$-generalized contractions introduced by \'Ciri\'c from  metric to  uniform
spaces endowed with a graph and discuss on the existence and
uniqueness of fixed points for this type of contractions using the basic entourages.
\end{abstract}
\def\thefootnote{ \ }
\footnotetext{{\em}$2010$ Mathematics Subject Classification.
47H10, 05C40.\par {\bf Keywords:} Separated uniform space;
\'Ciri\'c-$G$-contraction; Fixed point.}

\section{Introduction and Preliminaries}
In \cite{cir}, \'Ciri\'c  introduced the notion of a $\lambda$-generalized contraction on a metric
space $X$ as follows:
\begin{eqnarray*}
d(Tx,Ty)&\leq& q(x,y)d(x,y)+r(x,y)d(x,Tx)+s(x,y)d(y,Ty)\cr\\[-.3cm] &\
&+\,t(x,y)\big(d(x,Ty)+d(y,Tx)\big)\qquad\big((x,y)\in X),
\end{eqnarray*}
where $q$, $r$, $s$ and $t$ are nonnegative functions on $X\times
X$ such that
$$\sup\big\{q(x,y)+r(x,y)+s(x,y)+2t(x,y):x,y\in
X\big\}=\lambda<1.$$\par
Acharya \cite{ach} investigated some well-known types of
contractions in uniform spaces and Rhoades \cite{rho}
discussed on $\lambda$-generalized type contractions in uniform
spaces.\par Recently, Jachymski \cite{jac} entered graphs
in metric fixed point theory and generalized the Banach contraction principle in
both metric and partially ordered metric spaces.
For further works and results in metric and uniform spaces endowed with a
graph, see, e.g., \cite{agh, boj10,boj12,pet}.\par In this paper,  we generalize the
notion  of $\lambda$-generalized contractions from  metric to  uniform
spaces endowed with a graph and establish some results  on the existence and
uniqueness of fixed points via an entourage approach for this type of contractions. Despite the method  given in \cite{boj12} that the results therein may not be applied for (the partially ordered contractions induced by graph) their partially ordered counterparts, we will see that
 our contractions  both are extensions
of \'Ciri\'c-Reich-Rus operators  and  may also be converted to the language of partially ordered metric or uniform spaces.
  \par We start by reviewing a few basic notions in uniform
spaces. For a widespread discussion on the uniform spaces, the reader can see, e.g.,
\cite[pp.238-277]{wil}.\par Suppose that $X$ is a nonempty set and
$U$ and $V$ are nonempty subsets of $X\times X$. We let
\begin{itemize}
\item $\Delta(X)=\{(x,x):x\in X\}$ be the diagonal of $X$; \item
$U^{-1}=\{(x,y):(y,x)\in U\}$ be the inverse of $U$; and \item
$U\circ V=\{(x,y):\exists\,z\in X\ {\rm s.t.}\ (x,z)\in V\
,(z,y)\in U\}$.
\end{itemize}
Now assume that $\mathscr U$ is a nonempty family of subsets of
$X\times X$ satisfying the following properties:
\begin{description}
\item[\rm U1)] Each member of $\mathscr U$ contains
$\Delta(X)$; \item[\rm U2)] The intersection of each two members of
$\mathscr U$ lies in $\mathscr U$; \item[\rm U3)] $\mathscr U$
contains the inverses of its members; \item[\rm U4)] For each
$U\in\mathscr U$, there exists a $V\in\mathscr U$ such that
$V\circ V\subseteq U$; \item[\rm U5)] If $U\in\mathscr U$ and
$U\subseteq V$, then $V\in\mathscr U$.
\end{description}
Then $\mathscr U$ is called a uniformity on $X$ and the pair
$(X,\mathscr U)$ (shortly denoted by $X$) is called a uniform
space.\par It is well-known that a uniformity $\mathscr U$ on a
set $X$ is separating if the intersection of all members of
$\mathscr U$ is exactly the diagonal $\Delta(X)$. If this
is satisfied, then $X$ is called a separated uniform
space.\par To remind the convergence and Cauchyness notions in uniform spaces, let $\{x_n\}$ be a sequence in a uniform space $X$. Then $\{x_n\}$ is said to
be convergent to a point $x\in X$, denoted by $x_n\rightarrow x$,
if for each $U\in\mathscr U$, there exists an $N>0$ such that
$(x_n,x)\in U$ for all $n\geq N$, and it is said to be Cauchy
in $X$ if for each $U\in\mathscr U$, there exists an
$N>0$ such that $(x_m,x_n)\in U$ for all $m,n\geq N$. The uniform
space $X$ is called sequentially complete if each Cauchy sequence
in $X$ is convergent to some point of $X$. It can be easily
verified that if $x_n\rightarrow x$, then each subsequence of $\{x_n\}$ converges to $x$, and further in a separated uniform space, each sequence may
converge to at most one point, i.e., the limits of convergent
sequences is unique in separated uniform spaces.\par  Let $\mathscr F$ be a nonempty collection
of (uniformly continuous) pseudometrics on $X$ that generates the
uniformity $\mathscr U$ (see, \cite[Theorem 2.1]{ach}), and denote
by $\mathscr V$, the family of all sets of the form
$$\bigcap_{i=1}^m\Big\{(x,y)\in X\times X:\rho_i(x,y)<r_i\Big\},$$
where $m$ is a positive integer, $\rho_i\in\mathscr F$ and $r_i>0$
for $i=1,\ldots,m$. Then it has been shown that $\mathscr V$ is a
base for the uniformity $\mathscr U$, i.e., $\mathscr V$ satisfies (U1)-(U4) and each member of
$\mathscr U$ contains at least one member of $\mathscr V$. Finally, if
$$V=\bigcap_{i=1}^m\Big\{(x,y)\in X\times X:\rho_i(x,y)<r_i\Big\}\in\mathscr V$$ and $\alpha>0$, then
the set
$$\alpha V =\bigcap_{i=1}^m\Big\{(x,y)\in X\times X:\rho_i(x,y)<\alpha r_i\Big\}$$
is still a member of $\mathscr V$.\par The next lemma embodies
some important properties about the above-mentioned sets. For
other properties, the reader is referred to \cite[Lemmas
2.1-2.6]{ach}.

\begin{lem}{\rm\cite{ach}}\label{lem1} Let $X$ be a uniform space
and $\mathscr V$ be as above. Then the following assertions hold.
\begin{description}
\item[\rm i)] If $0<\alpha\leq\beta$, then $\alpha V\subseteq\beta
V$ for all $V\in\mathscr V$. \item[\rm ii)] If $\alpha,\beta>0$, then $\alpha V\circ\beta
V\subseteq(\alpha+\beta)V$ for all $V\in\mathscr V$. \item[\rm iii)] For each $x,y\in X$ and each $V\in\mathscr V$,
there exists a positive number $\lambda$ such that
$(x,y)\in\lambda V$. \item[\rm iv)] For each $V\in\mathscr V$,
there exists a pesudometric $\rho$ on $X$ such that $(x,y)\in V$ if and only if
$\rho(x,y)<1$.
\end{description}
\end{lem}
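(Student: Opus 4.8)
The plan is to fix an arbitrary $V\in\mathscr V$, write it in the canonical form $V=\bigcap_{i=1}^m\{(x,y)\in X\times X:\rho_i(x,y)<r_i\}$ with $\rho_i\in\mathscr F$ and $r_i>0$, and then prove each of the four assertions by unwinding this description pseudometric by pseudometric. The key observation that drives everything is that, for $\alpha>0$, the statement $(x,y)\in\alpha V$ is by definition equivalent to the finite conjunction $\rho_i(x,y)<\alpha r_i$ for $i=1,\dots,m$.

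For (i), if $(x,y)\in\alpha V$ and $0<\alpha\le\beta$, then $\rho_i(x,y)<\alpha r_i\le\beta r_i$ for every $i$, hence $(x,y)\in\beta V$. For (ii), take $(x,y)\in\alpha V\circ\beta V$; unwinding the definition of $\circ$ given in the preliminaries there is $z\in X$ with $(x,z)\in\beta V$ and $(z,y)\in\alpha V$, so $\rho_i(x,z)<\beta r_i$ and $\rho_i(z,y)<\alpha r_i$, and the triangle inequality for the pseudometric $\rho_i$ yields $\rho_i(x,y)\le\rho_i(x,z)+\rho_i(z,y)<(\alpha+\beta)r_i$ for each $i$, i.e. $(x,y)\in(\alpha+\beta)V$. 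For (iii), since each $\rho_i$ is finite-valued, the quantity $\lambda:=1+\max_{1\le i\le m}\rho_i(x,y)/r_i$ is a well-defined positive real, and by construction $\rho_i(x,y)<\lambda r_i$ for all $i$, so $(x,y)\in\lambda V$.

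For (iv), I would set $\rho:=\max_{1\le i\le m}\rho_i/r_i$. Each $\rho_i/r_i$ is a pseudometric, being a positive rescaling of one, and a finite maximum of pseudometrics is again a pseudometric: nonnegativity, symmetry and vanishing on the diagonal are immediate, while the triangle inequality follows from that of each $\rho_i/r_i$ together with $\max_i(a_i+b_i)\le\max_i a_i+\max_i b_i$. Then $\rho(x,y)<1$ holds precisely when $\rho_i(x,y)/r_i<1$ for every $i$, that is, precisely when $(x,y)\in V$, which is the claim.

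I do not expect a genuine obstacle here; the proof is a direct verification once $V$ is put in canonical form. The only two places calling for a moment's care are respecting the stated convention for $\circ$ in part (ii) (so that the roles of $\alpha V$ and $\beta V$, and of the intermediate point $z$, are the right way round) and confirming in part (iv) that a finite maximum of pseudometrics still satisfies the triangle inequality.
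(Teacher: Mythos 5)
Your proof is correct, and it is the standard direct verification: since the paper only cites this lemma from Acharya \cite{ach} without reproducing a proof, there is nothing internal to compare against, but your argument is exactly the intended one. In particular, your explicit formula $\rho=\max_{1\le i\le m}\rho_i/r_i$ in part (iv) coincides with the Minkowski gauge $\inf\{\lambda>0:(x,y)\in\lambda V\}$, which also immediately yields the extra property recorded in Remark \ref{remark1} that $(x,y)\in\alpha V$ if and only if $\rho(x,y)<\alpha$.
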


\begin{rem}\rm\label{remark1}
The pseudometric $\rho$ in Lemma \ref{lem1} (iv) is called the Minkowski's psudometric of $V$. Moreover, for any $\alpha>0$, we have $(x,y)\in\alpha V$ if and only if $\rho(x,y)<\alpha$. In other words, $\frac1\alpha\rho$ is the Minkowski's pseudometric of $\alpha V$.
\end{rem}

\section{Main Results}
Throughout this section, the letter $X$ is used to denote a
nonempty set equipped with a uniformity $\mathscr U$ and
$\mathscr{F}$ is a nonempty collection of (uniformly continuous)
pseudometrics on $X$ generating the uniformity $\mathscr{U}$.
Furthermore, $\mathscr{V}$ is the collection of all sets of the
form
$$\bigcap_{i=1}^m\Big\{(x,y)\in X\times X:\rho_i(x,y)<r_i\Big\},$$
where $m$ is a positive integer, $\rho_i\in\mathscr{F}$ and
$r_i>0$ for $i=1,\ldots,m$. The uniform space $X$ is also endowed with
a directed graph $G$ without any parallel edges such that $V(G)=X$
and $E(G)\supseteq\Delta(X)$, i.e., $E(G)$ contains all loops, and
by $\widetilde G$, it is meant the undirected graph obtained from
$G$ by ignoring the directions of the edges of $G$. The set of all
fixed points of a mapping $T:X\rightarrow X$ is denoted by
$\fix(T)$ and we set  $X_T=\{x\in X:(x,Tx)\in
E(G)\}$.\par The idea of following  definition  is taken from \cite[2.1. Definition]{cir} and
\cite[Definition 2.1]{jac} and it generalizes both of them.

\begin{defn}\label{ciric}\rm
Let $T$ be a mapping of $X$ into itself. Then we call $T$ a
\'Ciri\'c-$G$-contraction if
\begin{description}
\item[\rm C1)] $(x,y)\in E(G)$ implies $(Tx,Ty)\in E(G)$ for all
$x,y\in X$, that is, $T$ is edge preserving; \item[\rm C2)] for all
$x,y\in X$ and all $V_1,V_2,V_3,V_4,V_5\in\mathscr V$,
$$(x,y)\in E(G)\cap V_1,\ (x,Tx)\in V_2,\ (y,Ty)\in V_3,\ (x,Ty)\in V_4,\ \mbox{and}\ (y,Tx)\in V_5$$ imply
$$(Tx,Ty)\in a_1(x,y)V_1\circ a_2(x,y)V_2\circ a_3(x,y)V_3\circ a_4(x,y)V_4\circ a_4(x,y)V_5,$$
where $a_1$, $a_2$, $a_3$ and $a_4$ are positive-valued functions on
$X\times X$ satisfying
\begin{equation}\label{supremum}
\sup\big\{a_1(x,y)+a_2(x,y)+a_3(x,y)+2a_4(x,y):x,y\in
X\big\}=\alpha<1.
\end{equation}
\end{description}
\end{defn}\par
Note that if (\ref{supremum}) holds, then
$$a_1(x,y)+a_2(x,y)+a_4(x,y)+\alpha\big(a_3(x,y)+a_4(x,y)\big)<a_1(x,y)+a_2(x,y)+a_3(x,y)+2a_4(x,y)\leq\alpha,$$
for all $x,y\in X$. So
$$a_1(x,y)+a_2(x,y)+a_4(x,y)<\alpha\big(1-a_3(x,y)-a_4(x,y)\big)\qquad(x,y\in X),$$
which yields
\begin{equation}\label{abc}
\frac{a_1(x,y)+a_2(x,y)+a_4(x,y)}{1-a_3(x,y)-a_4(x,y)}<\alpha\qquad(x,y\in
X).
\end{equation}

\begin{exm}\rm
\begin{enumerate}
\item Since $E(G)$ and each member $V$ of $\mathscr V$ contain
$\Delta(X)$, it follows that each constant mapping $T:X\rightarrow
X$ is a \'Ciri\'c-$G$-contraction with any positive-valued functions
$a_1$, $a_2$, $a_3$ and $a_4$ satisfying \eqref{supremum}. \item
Let $G_0$ be the complete graph with $V(G_0)=X$, i.e.,
$E(G_0)=X\times X$. Then \'Ciri\'c-$G_0$-contractions
(simply \'Ciri\'c-contractions) are precisely the contractions that
generalize $\lambda$-generalized contractive mappings
introduced by \'Ciri\'c in \cite[2.1. Definition]{cir} (the existence and uniqueness of fixed
points for this type of contractions on sequentially complete
separated uniform spaces were investigated by Rhoades
\cite[Theorem 1]{rho}). \item Let $\preceq$ be a partial order on
$X$, and consider a graph $G_1$ by $V(G_1)=X$ and
$$E(G_1)=\big\{(x,y)\in X\times X:x\preceq y\big\}.$$
Then $E(G_1)$ contains all loops and \'Ciri\'c-$G_1$-contractions
are precisely the nondecreasing order \'Ciri\'c contractions.
\end{enumerate}
\end{exm}

Since each member of $\mathscr V$ is symmetric, the next
proposition follows immediately from the definition of  \'Ciri\'c-$G$-contraction.

\begin{prop}\label{tilde} If a mapping from $X$ into itself satisfies {\rm(C1)}
(respectively, {\rm(C2)}) for a graph $G$, then it satisfies
{\rm(C1)} (respectively, {\rm(C2)}) for the graph $\widetilde G$.
In particular, a \'Ciri\'c-$G$-contraction is also a
\'Ciri\'c-$\widetilde G$-contraction.
\end{prop}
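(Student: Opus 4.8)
The plan is to deduce both assertions straight from the definition of a \'Ciri\'c-$G$-contraction by a case split on which direction of an edge of $\widetilde G$ lies in $G$, using only two structural facts: that $(x,y)\in E(\widetilde G)$ holds precisely when $(x,y)\in E(G)$ or $(y,x)\in E(G)$, and that every member of $\mathscr V$, hence every scaled set $\alpha V$ with $V\in\mathscr V$ and $\alpha>0$, coincides with its own inverse.

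First I would verify (C1) for $\widetilde G$. Given $(x,y)\in E(\widetilde G)$: if $(x,y)\in E(G)$, then (C1) for $G$ gives $(Tx,Ty)\in E(G)\subseteq E(\widetilde G)$; if instead $(y,x)\in E(G)$, then (C1) for $G$ gives $(Ty,Tx)\in E(G)$, whence $(Tx,Ty)\in E(\widetilde G)$. This half is genuinely immediate.

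For (C2) for $\widetilde G$, fix $x,y\in X$ and $V_1,\dots,V_5\in\mathscr V$ satisfying the five membership hypotheses with $(x,y)\in E(\widetilde G)\cap V_1$. If $(x,y)\in E(G)$, the conclusion is exactly the conclusion of (C2) for $G$. If instead $(y,x)\in E(G)$, I would apply (C2) for $G$ to the ordered pair $(y,x)$ with the entourages fed in the permuted order $V_1,V_3,V_2,V_5,V_4$: by symmetry of the members of $\mathscr V$ one has $(y,x)\in E(G)\cap V_1$, and the remaining four hypotheses become $(y,Ty)\in V_3$, $(x,Tx)\in V_2$, $(y,Tx)\in V_5$, $(x,Ty)\in V_4$, all of which hold by assumption. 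This places $(Ty,Tx)$ in $a_1(y,x)V_1\circ a_2(y,x)V_3\circ a_3(y,x)V_2\circ a_4(y,x)V_5\circ a_4(y,x)V_4$; passing to inverses and applying $(A\circ B)^{-1}=B^{-1}\circ A^{-1}$ repeatedly, together with the symmetry of each scaled basic entourage, then yields membership of $(Tx,Ty)$ in the composition of the five sets $a_4(y,x)V_4,\ a_4(y,x)V_5,\ a_3(y,x)V_2,\ a_2(y,x)V_3,\ a_1(y,x)V_1$, taken in that order.

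I expect the crux — and essentially the only real obstacle — to be reconciling this last membership with the precise right-hand side demanded by (C2): the five scaled entourages emerge permuted, and the coefficient functions are evaluated at $(y,x)$ rather than $(x,y)$. The plan is to absorb this by noting that condition \eqref{supremum} is symmetric in its two arguments, so the coefficient functions attached to $\widetilde G$ may be taken to be built from $a_1,\dots,a_4$ while still obeying \eqref{supremum}, and by exploiting the particular shape of (C2) — the coefficient $a_1$ always accompanies the edge-entourage and $a_4$ accompanies the two ``cross'' entourages, both of which are interchanged by the swap $x\leftrightarrow y$ — so that, once the symmetry of the members of $\mathscr V$ and the inversion identity above are invoked, the permuted composition is again an admissible instance of (C2). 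Writing out this bookkeeping once confirms that $T$ satisfies (C2) for $\widetilde G$, and hence that every \'Ciri\'c-$G$-contraction is a \'Ciri\'c-$\widetilde G$-contraction.
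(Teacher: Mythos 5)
Your treatment of (C1) is exactly right, and your whole strategy is the one the paper itself has in mind: the paper offers no written proof beyond the remark that the proposition ``follows immediately'' from the symmetry of the members of $\mathscr V$, and your case split on which orientation of the edge lies in $E(G)$, the application of (C2) for $G$ to the pair $(y,x)$ with the entourages fed in the order $V_1,V_3,V_2,V_5,V_4$, the identity $(A\circ B)^{-1}=B^{-1}\circ A^{-1}$, and the coefficient bookkeeping (taking $\tilde a_1(x,y)=a_1(y,x)$, $\tilde a_2(x,y)=a_3(y,x)$, $\tilde a_3(x,y)=a_2(y,x)$, $\tilde a_4(x,y)=a_4(y,x)$ on pairs where only the reversed edge lies in $G$, which visibly preserves \eqref{supremum}) are precisely the content hiding behind that one line.

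The step you yourself flag as the crux is, however, not closed by what you write. After inverting you have $(Tx,Ty)$ in $a_4(y,x)V_4\circ a_4(y,x)V_5\circ a_3(y,x)V_2\circ a_2(y,x)V_3\circ a_1(y,x)V_1$, whereas (C2) for $\widetilde G$ demands membership in a composition in which $V_1,\dots,V_5$ appear in that fixed order. Composition of relations is not commutative: for basic entourages built from different pseudometrics one only gets $\alpha V\circ\beta W=(\beta W\circ\alpha V)^{-1}$, which need not equal $\beta W\circ\alpha V$, so the assertion that ``the permuted composition is again an admissible instance of (C2)'' does not follow from the symmetry of the individual factors, and no choice of the functions $\tilde a_i$ repairs an order mismatch. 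To be fair, this defect is inherited from the paper rather than introduced by you: the authors' ``immediate'' proof silently commutes the factors as well, and the issue is invisible in every place the proposition or condition (C2) is actually used, because there all five entourages are scalings $r_iV$ of a single $V\in\mathscr V$ and Lemma \ref{lem1}(ii) collapses any ordering of the composition into $\bigl(\sum_i a_ir_i\bigr)V$. A watertight write-up should therefore either prove (C2) for $\widetilde G$ with the factors in the reversed, permuted order (which is all the rest of the paper ever needs) or restrict attention to the case of a common $V$, rather than claim the permuted composition literally matches the template of (C2).
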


To investigate the existence and uniqueness of fixed points for \'Ciri\'c-$G$-contractions, we need following lemmas.

\begin{lem}\label{lem2} Let $T:X\rightarrow X$ be a \'Ciri\'c-$G$-contraction and $V\in\mathscr V$. If $x\in X_T$ is such that $(x,Tx)\in V$, then
$$(T^nx,T^{n+1}x)\in\alpha^nV\qquad n=0,1,\ldots\,,$$
where $\alpha$ is as in \eqref{supremum}.
\end{lem}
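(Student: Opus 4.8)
The plan is to prove the statement by induction on $n$, using the contraction condition (C2) applied to the pair $(T^{n}x, T^{n}x)$... no wait, we need a genuine edge. Let me reconsider: the natural pair to feed into (C2) is $(x, Tx)$ itself at each stage, i.e. the pair $(T^{n-1}x, T^{n}x)$. Since $x \in X_T$ means $(x,Tx) \in E(G)$, and $T$ is edge-preserving by (C1), an easy sub-induction shows $(T^{n-1}x, T^{n}x) \in E(G)$ for every $n \ge 1$; this keeps us inside the set where (C2) has content. The base case $n=0$ is just the hypothesis $(x,Tx)\in V = \alpha^0 V$.

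For the inductive step, assume $(T^{n-1}x, T^{n}x) \in \alpha^{n-1}V$. I would apply (C2) with the choices $x' = T^{n-1}x$, $y' = T^{n}x$, and entourages $V_1 = \alpha^{n-1}V$ (this is a legitimate member of $\mathscr V$ by the scaling remark before Lemma~\ref{lem1}), and then pick $V_2, V_3, V_4, V_5$ to be whatever members of $\mathscr V$ contain $(x', Tx')$, $(y', Ty')$, $(x', Ty')$, $(y', Tx')$ respectively — existence of such members is guaranteed by Lemma~\ref{lem1}(iii). The subtlety is that $V_2 = (T^{n-1}x, T^{n}x)$'s entourage can again be taken to be $\alpha^{n-1}V$ by the inductive hypothesis; $V_3$ is exactly $(T^{n}x, T^{n+1}x)$'s entourage, which is the very thing we are trying to control, so we must take $V_3 = \lambda V$ for some a priori unknown $\lambda > 0$ (Lemma~\ref{lem1}(iii)); $V_4 = (T^{n-1}x, T^{n+1}x)$'s entourage, which by composing the previous two steps lies in $\alpha^{n-1}V \circ \lambda V \subseteq (\alpha^{n-1}+\lambda)V$ using Lemma~\ref{lem1}(ii); and $V_5 = (T^{n}x, T^{n}x)$'s entourage can be taken arbitrarily small, but cleanly we may take it to be contained in any positive multiple of $V$, so its contribution is negligible — in fact since $(T^nx,T^nx)\in \Delta(X)\subseteq \varepsilon V$ for every $\varepsilon>0$, we can absorb it.

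Feeding these into (C2) and using Lemma~\ref{lem1}(i)–(ii) to collapse the composition, the conclusion $(T^{n}x, T^{n+1}x) \in W$ becomes: $W$ is contained in a single member of $\mathscr V$ of the form $\mu_n V$ where $\mu_n \le a_1 \alpha^{n-1} + a_2 \alpha^{n-1} + a_3 \lambda + a_4(\alpha^{n-1} + \lambda) + a_4 \varepsilon$, with $a_i = a_i(T^{n-1}x, T^{n}x)$ and $\varepsilon>0$ arbitrary. Letting $\varepsilon \to 0$ and solving the resulting inequality $\lambda \le (a_1+a_2+a_4)\alpha^{n-1} + (a_3+a_4)\lambda$, i.e. $\lambda(1 - a_3 - a_4) \le (a_1+a_2+a_4)\alpha^{n-1}$, gives $\lambda \le \frac{a_1+a_2+a_4}{1-a_3-a_4}\,\alpha^{n-1} < \alpha \cdot \alpha^{n-1} = \alpha^n$ by inequality~\eqref{abc}. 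Hence $(T^{n}x, T^{n+1}x) \in \lambda V \subseteq \alpha^n V$ by Lemma~\ref{lem1}(i), completing the induction.

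The main obstacle is the circularity in the bound for $V_3$ and $V_4$: the entourage controlling $(T^nx, T^{n+1}x)$ appears on both sides of the estimate produced by (C2). The resolution — and the step requiring the most care — is to introduce $\lambda$ as an unknown via Lemma~\ref{lem1}(iii), derive a self-referential inequality $\lambda(1-a_3-a_4)\le(a_1+a_2+a_4)\alpha^{n-1}$ (plus an $\varepsilon$ that vanishes in the limit), and then use the algebraic consequence~\eqref{abc} of the supremum condition~\eqref{supremum} to close the loop and recover the clean geometric bound $\alpha^n$. One must also be slightly careful that $1 - a_3(x,y) - a_4(x,y) > 0$ for all $x,y$, which is immediate from~\eqref{supremum} since $a_3 + a_4 \le a_1 + a_2 + a_3 + 2a_4 \le \alpha < 1$ (recalling all $a_i$ are positive-valued).
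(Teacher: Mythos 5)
Your overall strategy---feed the pair $(T^{n-1}x,T^{n}x)$ into (C2), collapse the composition with Lemma \ref{lem1}(ii), and resolve the self-reference coming from the terms $(T^nx,T^{n+1}x)$ and $(T^{n-1}x,T^{n+1}x)$ by means of inequality \eqref{abc}---is exactly the paper's. But there is a genuine gap at the step where you ``solve the resulting inequality'' $\lambda\le(a_1+a_2+a_4)\alpha^{n-1}+(a_3+a_4)\lambda$. The $\lambda$ you introduced via Lemma \ref{lem1}(iii) is merely \emph{some} positive number with $(T^nx,T^{n+1}x)\in\lambda V$; nothing in (C2) forces $\lambda$ to be bounded by the coefficient $\mu_n$ that the contraction produces. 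Indeed, writing $A=(a_1+a_2+a_4)\alpha^{n-1}$ and $B=a_3+a_4<1$, the inequality $\lambda\le A+B\lambda$ is \emph{equivalent} to $\lambda\le A/(1-B)$, i.e.\ essentially to the conclusion you are trying to establish, and it is simply false for a large admissible $\lambda$ (any $\lambda>A/(1-B)$ satisfies Lemma \ref{lem1}(iii) by Lemma \ref{lem1}(i), yet then $A+B\lambda<\lambda$). So, as written, the key step is circular.

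The missing device is Lemma \ref{lem1}(iv) together with Remark \ref{remark1}: pass to the Minkowski pseudometric $\rho$ of $V$ and set $r_1=\rho(T^{n-1}x,T^nx)$, $r_2=\rho(T^nx,T^{n+1}x)$, $r_3=\rho(T^{n-1}x,T^{n+1}x)$. For these \emph{specific} numbers, membership $(T^nx,T^{n+1}x)\in\mu V$ genuinely implies $r_2<\mu$; so applying (C2) with the entourages $(r_i+\varepsilon)V$ and letting $\varepsilon\to0$ yields the true inequality $r_2\le(a_1+a_2)r_1+a_3r_2+a_4r_3\le(a_1+a_2+a_4)r_1+(a_3+a_4)r_2$, whence $r_2<\alpha r_1$ by \eqref{abc}, and iterating, $r_2<\alpha^n\rho(x,Tx)<\alpha^n$, i.e.\ $(T^nx,T^{n+1}x)\in\alpha^nV$. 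This also repairs a second soft spot in your plan: the limit ``$\varepsilon\to0$'' cannot be taken at the level of entourages, since $\bigcap_{\varepsilon>0}(c+\varepsilon)V$ is in general strictly larger than $cV$ (the latter is defined by strict inequalities); the limit must be taken on pseudometric values. Alternatively, you could keep your arbitrary $\lambda$ and iterate the affine map $\lambda\mapsto A+B\lambda$, whose iterates decrease to the fixed point $A/(1-B)<\alpha^n$; but some such extra device is indispensable to break the circularity.
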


\begin{proof} If $n=0$, then there is nothing to prove.  Let $n\geq1$ and denote by $\rho$, the Minkowski's pseudometric of $V$. Write
$$\rho(T^{n-1}x,T^nx)=r_1,\quad\rho(T^nx,T^{n+1}x)=r_2,\quad\mbox{and}\quad\rho(T^{n-1}x,T^{n+1}x)=r_3$$
and let $\varepsilon>0$. Then it is clear that
$$(T^{n-1}x,T^nx)\in(r_1+\varepsilon)V,\quad(T^nx,T^{n+1}x)\in(r_2+\varepsilon)V,$$
$$(T^{n-1}x,T^{n+1}x)\in(r_3+\varepsilon)V,\quad\mbox{and}\quad(T^nx,T^nx)\in\varepsilon V.$$
Note that by (C1), we have $(T^{n-1}x,T^nx)\in E(G)$. Hence it follows by (C2) and Lemma \ref{lem1} that
\begin{eqnarray*}
(T^nx,T^{n+1}x)&\in&a_1(T^{n-1}x,T^nx)(r_1+\varepsilon)V\circ
a_2(T^{n-1}x,T^nx)(r_1+\varepsilon)V\cr\\[-.3cm]
&\ &\circ\,a_3(T^{n-1}x,T^nx)(r_2+\varepsilon)V\circ
a_4(T^{n-1}x,T^nx)(r_3+\varepsilon)V\circ a_4(T^{n-1}x,T^nx)\varepsilon V\cr\\[-.3cm]
&\subseteq&\Big(\big(a_1(T^{n-1}x,T^nx)+a_2(T^{n-1}x,T^nx)\big)r_1
+a_3(T^{n-1}x,T^nx)r_2+a_4(T^{n-1}x,T^nx)r_3\cr\\[-.3cm]
&\ &+\,\big(a_1(T^{n-1}x,T^nx)+a_2(T^{n-1}x,T^nx)+a_3(T^{n-1}x,T^nx)
+2a_4(T^{n-1}x,T^nx)\big)\varepsilon\Big)V\cr\\[-.3cm]
&\subseteq&\Big(\big(a_1(T^{n-1}x,T^nx)+a_2(T^{n-1}x,T^nx)\big)r_1\cr\\[-.3cm]
&\
&+\,a_3(T^{n-1}x,T^nx)r_2+a_4(T^{n-1}x,T^nx)r_3+\alpha\varepsilon\Big)V,
\end{eqnarray*}
where $\alpha$ is as in \eqref{supremum}. Because $\rho$ is the
Minkowski's pseudometric of $V$, it follows by Remark \ref{remark1} that
\begin{eqnarray*}
\rho(T^nx,T^{n+1}x)&<&\big(a_1(T^{n-1}x,T^nx)+a_2(T^{n-1}x,T^nx)\big)r_1\cr\\[-.3cm]
&\ &+\,a_3(T^{n-1}x,T^nx)r_2
+a_4(T^{n-1}x,T^nx)r_3+\alpha\varepsilon\cr\\[-.3cm]
&=&\big(a_1(T^{n-1}x,T^nx)+a_2(T^{n-1}x,T^nx)\big)\rho(T^{n-1}x,T^nx)\cr\\[-.3cm]
&\
&+\,a_3(T^{n-1}x,T^nx)\rho(T^nx,T^{n+1}x)+a_4(T^{n-1}x,T^nx)\rho(T^{n-1}x,T^{n+1}x)+\alpha\varepsilon.
\end{eqnarray*}
Since $\varepsilon>0$ was arbitrary, we obtain
\begin{eqnarray*}
\rho(T^nx,T^{n+1}x)&\leq&\big(a_1(T^{n-1}x,T^nx)+a_2(T^{n-1}x,T^nx)\big)\rho(T^{n-1}x,T^nx)\cr\\[-.3cm]
&\
&+\,a_3(T^{n-1}x,T^nx)\rho(T^nx,T^{n+1}x)+a_4(T^{n-1}x,T^nx)\rho(T^{n-1}x,T^{n+1}x)\cr\\[-.3cm]
&\leq&\big(a_1(T^{n-1}x,T^nx)+a_2(T^{n-1}x,T^nx)\big)\rho(T^{n-1}x,T^nx)\cr\\[-.3cm]
&\
&+\,a_3(T^{n-1}x,T^nx)\rho(T^nx,T^{n+1}x)\cr\\[-.3cm]
&\ &+\,a_4(T^{n-1}x,T^nx)\big(\rho(T^{n-1}x,T^nx)+\rho(T^nx,T^{n+1}x)\big).
\end{eqnarray*}
Therefore, by (\ref{abc}),
\begin{eqnarray}\label{inductiononn}
\rho(T^nx,T^{n+1}x)&\leq&\frac{a_1(T^{n-1}x,T^nx)+a_2(T^{n-1}x,T^nx)+a_4(T^{n-1}x,T^nx)}
{1-a_3(T^{n-1}x,T^nx)-a_4(T^{n-1}x,T^nx)}
\cdot\rho(T^{n-1}x,T^nx)\nonumber\cr\\
&<&\alpha\rho(T^{n-1}x,T^nx)<\cdots<\alpha^n\rho(x,Tx).
\end{eqnarray}
Because $(x,Tx)\in V$, it follows that $\rho(x,Tx)<1$, and hence
using (\ref{inductiononn}), one has
$\rho(T^nx,T^{n+1}x)<\alpha^n$, that is,
$(T^nx,T^{n+1}x)\in\alpha^nV$.
\end{proof}

\begin{lem}\label{lem3} Let $T:X\rightarrow X$ be a \'Ciri\'c-$G$-contraction.
Then the sequence $\{T^nx\}$ is Cauchy in $X$ for all $x\in X_T$.
\end{lem}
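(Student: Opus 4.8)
The plan is to show that $\{T^nx\}$ is a Cauchy sequence by estimating, for any given entourage $W\in\mathscr U$, the "distance" between $T^mx$ and $T^nx$ for large $m,n$. Since $\mathscr V$ is a base for $\mathscr U$, it suffices to produce, for each $V\in\mathscr V$, an index $N$ such that $(T^mx,T^nx)\in V$ whenever $m,n\geq N$. Fix $x\in X_T$ and $V\in\mathscr V$. By Lemma \ref{lem1}(iii) there is a $\lambda>0$ with $(x,Tx)\in\lambda V$, and rescaling we may work with $V'=\lambda V\in\mathscr V$, so that $(x,Tx)\in V'$. Then Lemma \ref{lem2} applies to $V'$ and gives $(T^kx,T^{k+1}x)\in\alpha^kV'$ for all $k\geq0$.

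Next I would telescope: for $m<n$, using Lemma \ref{lem1}(ii) repeatedly (that $\beta_1V'\circ\beta_2V'\circ\cdots\subseteq(\beta_1+\beta_2+\cdots)V'$) together with the chain $(T^mx,T^{m+1}x)\in\alpha^mV'$, $(T^{m+1}x,T^{m+2}x)\in\alpha^{m+1}V'$, \ldots, $(T^{n-1}x,T^nx)\in\alpha^{n-1}V'$, we obtain
$$(T^mx,T^nx)\in\Big(\alpha^m+\alpha^{m+1}+\cdots+\alpha^{n-1}\Big)V'\subseteq\frac{\alpha^m}{1-\alpha}\,V',$$
where the last inclusion uses Lemma \ref{lem1}(i) since the geometric sum is at most $\alpha^m/(1-\alpha)$. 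Since $0\leq\alpha<1$, we have $\alpha^m/(1-\alpha)\to0$, so there is an $N$ with $\alpha^m/(1-\alpha)\leq\lambda^{-1}$ for all $m\geq N$ (recall $V'=\lambda V$). Then for $n>m\geq N$, Lemma \ref{lem1}(i) and Remark \ref{remark1} give $(T^mx,T^nx)\in\frac{\alpha^m}{1-\alpha}V'=\frac{\alpha^m}{1-\alpha}\lambda V\subseteq V$. The case $m=n$ is trivial since $V\supseteq\Delta(X)$, and by symmetry of members of $\mathscr V$ the case $m>n$ follows as well. Hence $\{T^nx\}$ is Cauchy.

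The only mildly delicate point is bookkeeping with the scaling factor $\lambda$ coming from Lemma \ref{lem1}(iii): one must be careful that applying Lemma \ref{lem2} to $V'=\lambda V$ (rather than to $V$ itself, to which it need not apply, since $(x,Tx)\in V$ may fail) still yields, after the geometric-series estimate, an inclusion back inside the original $V$ once $m$ is large. This is handled cleanly by absorbing $\lambda$ into the threshold $N$. Everything else is a routine finite composition of entourages controlled by Lemma \ref{lem1}(i)--(ii) and the fact that a geometric series with ratio $\alpha<1$ has bounded partial sums. No completeness of $X$ is used here — that will enter only in the subsequent fixed-point theorem.
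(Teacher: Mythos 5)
Your proof is correct and follows essentially the same route as the paper's: apply Lemma \ref{lem2} to $\lambda V$ and exploit the geometric decay $\alpha^n$ to make the tail sums small. The only cosmetic difference is that you telescope by composing entourages via Lemma \ref{lem1}(ii), whereas the paper passes to the Minkowski pseudometric of $V$ and uses the triangle inequality on the convergent series $\sum_n\rho(T^nx,T^{n+1}x)$ --- two phrasings of the same estimate.
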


\begin{proof} Let $x\in X_T$ and $V\in\mathscr V$ be given. Then Lemma \ref{lem1} ensures the existence of a positive number $\lambda$
such that $(x,Tx)\in\lambda V$, and so, by Lemma \ref{lem2} we have
$$(T^nx,T^{n+1}x)\in(\alpha^n\lambda)V\qquad n=0,1,\ldots,$$
where $\alpha$ is as in \eqref{supremum}. Now, if $\rho$ is the Minkowski's pseudometric of $V$, then by Remark \ref{remark1},
$\rho(T^nx,T^{n+1}x)<\alpha^n\lambda$ for all $n\geq0$, and since $\alpha<1$, it follows that
$$\sum_{n=0}^\infty\rho(T^nx,T^{n+1}x)\leq\sum_{n=0}^\infty\alpha^n\lambda=\frac\lambda{1-\alpha}<\infty.$$
An easy argument shows that $\rho(T^mx,T^nx)\rightarrow0$ as
$m,n\rightarrow\infty$. Hence there exists an $N>0$ such that
$\rho(T^mx,T^nx)<1$ for all $m,n\geq N$. Therefore, $(T^mx,T^nx)\in V$ for
all $m,n\geq N$, and because $V\in\mathscr V$ was arbitrary, it is
concluded that the sequence $\{T^nx\}$ is Cauchy in $X$.
\end{proof}

We are now ready to prove our main theorem.

\begin{thm}\label{main}
Suppose that the uniform space $X$ is sequentially complete and
separated, and has the following property:
\begin{description}
\item[$(\ast)$] If a sequence $\{x_n\}$ converges to some point
$x\in X$ and it satisfies $(x_n,x_{n+1})\in E(G)$ for all
$n\geq1$, then there exists a subsequence $\{x_{n_k}\}$ of
$\{x_n\}$ such that $(x_{n_k},x)\in E(G)$ for all $k\geq1$.
\end{description}
Then a \'Ciri\'c-$G$-contraction $T:X\rightarrow X$ has a fixed
point if and only if $X_T\neq\emptyset$. Furthermore, this fixed
point is unique if
\begin{description}
\item[\rm i)] the functions $a_2$ and $a_3$ in {\rm(C2)} coincide
on $X\times X$; and \item[\rm ii)] for all $x,y\in X$, there
exists a $z\in X$ such that $(x,z),(y,z)\in E(\widetilde G)$.
\end{description}
\end{thm}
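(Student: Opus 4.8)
The plan is to establish existence first, then uniqueness. For existence, the ``only if'' direction is immediate: if $p\in\fix(T)$ then $(p,Tp)=(p,p)\in\Delta(X)\subseteq E(G)$, so $p\in X_T$ and $X_T\neq\emptyset$. For the ``if'' direction, fix $x\in X_T$. By Lemma \ref{lem3} the Picard sequence $\{T^nx\}$ is Cauchy, and since $X$ is sequentially complete it converges to some $x^\ast\in X$. The sequence $\{T^nx\}$ satisfies $(T^nx,T^{n+1}x)\in E(G)$ for all $n$ by repeated use of (C1), so property $(\ast)$ gives a subsequence $\{T^{n_k}x\}$ with $(T^{n_k}x,x^\ast)\in E(G)$ for all $k$. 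The goal is then to show $Tx^\ast=x^\ast$, which in a separated uniform space amounts to showing $(x^\ast,Tx^\ast)\in V$ for every $V\in\mathscr V$ (since $\bigcap\mathscr U=\Delta(X)$). Fix such a $V$ with Minkowski pseudometric $\rho$. I would apply (C2) with the pair $(T^{n_k}x,x^\ast)\in E(G)$: choosing suitable scalings of $V$ that contain the five relevant pairs $(T^{n_k}x,x^\ast)$, $(T^{n_k}x,T^{n_k+1}x)$, $(x^\ast,Tx^\ast)$, $(T^{n_k}x,Tx^\ast)$, $(x^\ast,T^{n_k+1}x)$, Lemma \ref{lem1}(ii) collapses the composition into a single scaled copy of $V$, yielding via Remark \ref{remark1} an inequality of the shape
\[
\rho(T^{n_k+1}x,Tx^\ast)\le a_1\rho(T^{n_k}x,x^\ast)+a_2\rho(T^{n_k}x,T^{n_k+1}x)+a_3\rho(x^\ast,Tx^\ast)+a_4\big(\rho(T^{n_k}x,Tx^\ast)+\rho(x^\ast,T^{n_k+1}x)\big),
\]
where the coefficients are evaluated at $(T^{n_k}x,x^\ast)$ and an $\varepsilon$ has already been sent to $0$. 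Bounding $\rho(T^{n_k}x,Tx^\ast)\le\rho(T^{n_k}x,x^\ast)+\rho(x^\ast,Tx^\ast)$ and using the triangle inequality on the last term, then letting $k\to\infty$ (so that $\rho(T^{n_k}x,x^\ast)\to0$ and $\rho(T^{n_k+1}x,x^\ast)\to0$, hence $\rho(T^{n_k+1}x,T^{n_k}x)\to 0$), one is left with $\rho(x^\ast,Tx^\ast)\le(a_3+a_4)\,\rho(x^\ast,Tx^\ast)+(\text{terms}\to 0)$ — more carefully, after absorbing, $\rho(x^\ast,Tx^\ast)\le \frac{\limsup(\cdots)}{1-a_3-a_4}$, and since all the surviving numerator terms vanish and $\sup(a_3+a_4)\le\alpha<1$ keeps the denominator bounded away from $0$, we get $\rho(x^\ast,Tx^\ast)\le 0$, so $(x^\ast,Tx^\ast)\in V$. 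As $V$ was arbitrary, $Tx^\ast=x^\ast$.

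For uniqueness under hypotheses (i) and (ii), suppose $x^\ast$ and $y^\ast$ are both fixed points. By (ii) there is $z\in X$ with $(x^\ast,z),(y^\ast,z)\in E(\widetilde G)$. First I would show that $\{T^nz\}$ converges to $x^\ast$ (and, symmetrically, to $y^\ast$); separatedness then forces $x^\ast=y^\ast$. Since by Proposition \ref{tilde} $T$ is also a \'Ciri\'c-$\widetilde G$-contraction and is edge preserving on $\widetilde G$, we have $(x^\ast,T^nz)=(T^nx^\ast,T^nz)\in E(\widetilde G)$ for all $n$. Fix $V\in\mathscr V$ with Minkowski pseudometric $\rho$; I would apply (C2) (for $\widetilde G$) to the pair $(x^\ast,T^nz)$ — after appropriate scalings of $V$ and $\varepsilon\to 0$ — to obtain, using $Tx^\ast=x^\ast$ so that $\rho(x^\ast,Tx^\ast)=0$,
\[
\rho(x^\ast,T^{n+1}z)\le a_1\rho(x^\ast,T^nz)+a_3\rho(T^nz,T^{n+1}z)+a_4\big(\rho(x^\ast,T^{n+1}z)+\rho(T^nz,x^\ast)\big),
\]
where now the coefficient of $\rho(x^\ast,Tx^\ast)$, namely $a_2$ evaluated at $(x^\ast,T^nz)$, has been used; here is exactly where hypothesis (i), $a_2=a_3$, is harmless — actually the $a_2$ term drops out regardless because $\rho(x^\ast,Tx^\ast)=0$. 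Using $\rho(T^nz,T^{n+1}z)\le\rho(T^nz,x^\ast)+\rho(x^\ast,T^{n+1}z)$ and rearranging,
\[
\rho(x^\ast,T^{n+1}z)\le\frac{a_1+a_3+a_4}{1-a_3-a_4}\,\rho(x^\ast,T^nz)\le\beta\,\rho(x^\ast,T^nz),
\]
and the point where (i) is genuinely needed is to make the constant $\frac{a_1+a_3+a_4}{1-a_3-a_4}=\frac{a_1+a_2+a_4}{1-a_3-a_4}$ equal to the quantity in \eqref{abc}, hence $<\alpha<1$ uniformly. Iterating gives $\rho(x^\ast,T^nz)\le\beta^n\rho(x^\ast,z)\to 0$, and since $\rho(x^\ast,z)<\infty$ can be arranged (Lemma \ref{lem1}(iii) lets us pick the initial scaling), $\rho(x^\ast,T^nz)\to 0$ for every $V$, i.e.\ $T^nz\to x^\ast$. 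By the identical argument with $y^\ast$ in place of $x^\ast$, $T^nz\to y^\ast$, and uniqueness of limits in the separated space $X$ yields $x^\ast=y^\ast$.

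I expect the main obstacle to be the bookkeeping in the existence part: one must choose the five scaling factors for $V$ simultaneously (each of the form $(\rho(\cdot,\cdot)+\varepsilon)V$ or $\varepsilon V$ as in the proof of Lemma \ref{lem2}), verify all five membership conditions of (C2) hold, collapse the five-fold composition by Lemma \ref{lem1}(ii), and only \emph{then} pass to the limit in $k$ — and the limit must be taken carefully because the coefficients $a_i(T^{n_k}x,x^\ast)$ vary with $k$, so one works with their supremum $\alpha$ rather than pointwise limits. The denominator $1-a_3-a_4$ must be controlled: from \eqref{supremum}, $a_3(x,y)+a_4(x,y)\le\alpha<1$, so $1-a_3-a_4\ge 1-\alpha>0$ uniformly, which both legitimises the division and keeps the resulting bound meaningful in the limit. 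The uniqueness part is comparatively routine once Proposition \ref{tilde} is invoked, with the only subtlety being the role of hypothesis (i) in forcing the contraction constant for the auxiliary sequence $\{T^nz\}$ below $1$.
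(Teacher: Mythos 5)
Your proposal is correct and follows essentially the same route as the paper: Lemma \ref{lem3} plus sequential completeness and property $(\ast)$ to get the limit $x^\ast$ and the subsequence of edges, the $\varepsilon$-scaling/Minkowski-pseudometric device to turn (C2) into the displayed inequality, triangle-inequality absorption with the uniform bound $1-a_3-a_4\geq 1-\alpha>0$, and the iterated estimate $\rho'(x^\ast,T^nz)\leq\alpha^n\rho'(x^\ast,z)$ (where hypothesis (i) is exactly what brings the ratio under \eqref{abc}) for uniqueness. The only cosmetic differences are that you conclude existence by showing $\rho(x^\ast,Tx^\ast)=0$ directly rather than proving $T^{n_k+1}x\rightarrow Tx^\ast$ and invoking uniqueness of limits, and you handle uniqueness in one case via $z$ where the paper separates out the (logically redundant) case $(x^\ast,y^\ast)\in E(G)$.
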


\begin{proof} It is clear that each fixed point of $T$ is an element of $X_T$. For the converse, let $x\in X_T$. Then by Lemma \ref{lem3}, the
sequence $\{T^nx\}$ is Cauchy in $X$. By sequential completeness of $X$, there exists an $x^*\in X$ such that $T^nx\rightarrow
x^*$. On the other hand, since $x\in X_T$ and $T$ is edge preserving, it follows that
$(T^nx,T^{n+1}x)\in E(G)$ for all $n\geq0$. Therefore, by Property
$(\ast)$, there exists a strictly increasing sequence $\{n_k\}$ of
positive integers such that $(T^{n_k}x,x^*)\in E(G)$ for all $k\geq1$. We shall
show that $T^{n_k+1}x\rightarrow Tx^*$. To this end, let $V$ be an
arbitrary member of $\mathscr V$ and denote by $\rho$, the Minkowski's pseudometric of
$V$. Let $k\geq1$, write
$$\rho(T^{n_k}x,x^*)=r_1,\quad\rho(T^{n_k}x,T^{n_k+1}x)=r_2,\quad\rho(x^*,Tx^*)=r_3,$$
$$\rho(T^{n_k}x,Tx^*)=r_4,\quad\mbox{and}\quad\rho(x^*,T^{n_k+1}x)=r_5,$$ and take $\varepsilon>0$. Then it is clear that
$$(T^{n_k}x,x^*)\in(r_1+\varepsilon)V,\quad(T^{n_k}x,T^{n_k+1}x)\in(r_2+\varepsilon)V,
\quad(x^*,Tx^*)\in(r_3+\varepsilon)V,$$
$$(T^{n_k}x,Tx^*)\in(r_4+\varepsilon)V,\quad\mbox{and}\quad(x^*,T^{n_k+1}x)\in(r_5+\varepsilon)V.$$
Therefore, by (C2) and Lemma \ref{lem1}, we have
\begin{eqnarray*}
(T^{n_k+1}x,Tx^*)&\in&a_1(T^{n_k}x,x^*)(r_1+\varepsilon)V\circ
a_2(T^{n_k}x,x^*)(r_2+\varepsilon)V\circ
a_3(T^{n_k}x,x^*)(r_3+\varepsilon)V\cr\\[-.3cm] &\ &\circ\,
a_4(T^{n_k}x,x^*)(r_4+\varepsilon)V\circ
a_4(T^{n_k}x,x^*)(r_5+\varepsilon)V\cr\\[-.3cm] &\subseteq&
\Big(a_1(T^{n_k}x,x^*)r_1+a_2(T^{n_k}x,x^*)r_2+a_3(T^{n_k}x,x^*)r_3+a_4(T^{n_k}x,x^*)\big(r_4+r_5\big)\cr\\[-.3cm]
&\
&+\,\big(a_1(T^{n_k}x,x^*)+a_2(T^{n_k}x,x^*)+a_3(T^{n_k}x,x^*)+2a_4(T^{n_k}x,x^*)\big)\varepsilon\Big)V\cr\\[-.3cm]
&\subseteq&\Big(a_1(T^{n_k}x,x^*)r_1+a_2(T^{n_k}x,x^*)r_2+a_3(T^{n_k}x,x^*)r_3+a_4(T^{n_k}x,x^*)\big(r_4+r_5\big)
+\alpha\varepsilon\Big)V,
\end{eqnarray*}
where $\alpha$ is as in \eqref{supremum}. Now by Remark \ref{remark1}, we get
\begin{eqnarray*}
\rho(T^{n_k+1}x,Tx^*)&<&a_1(T^{n_k}x,x^*)r_1+a_2(T^{n_k}x,x^*)r_2+a_3(T^{n_k}x,x^*)r_3
+a_4(T^{n_k}x,x^*)\big(r_4+r_5\big)+\alpha\varepsilon\cr\\[-.3cm]
&=&a_1(T^{n_k}x,x^*)\rho(T^{n_k}x,x^*)+a_2(T^{n_k}x,x^*)\rho(T^{n_k}x,T^{n_k+1}x)\cr\\[-.3cm]
&\ &+\,a_3(T^{n_k}x,x^*)\rho(x^*,Tx^*)
+a_4(T^{n_k}x,x^*)\big(\rho(T^{n_k}x,Tx^*)+\rho(x^*,T^{n_k+1}x)\big)+\alpha\varepsilon.
\end{eqnarray*}
Since $\varepsilon>0$ was arbitrary, we obtain
\begin{eqnarray*}
\rho(T^{n_k+1}x,Tx^*)&\leq&a_1(T^{n_k}x,x^*)\rho(T^{n_k}x,x^*)+a_2(T^{n_k}x,x^*)\rho(T^{n_k}x,T^{n_k+1}x)\cr\\[-.3cm]
&\ &+\,a_3(T^{n_k}x,x^*)\rho(x^*,Tx^*)
+a_4(T^{n_k}x,x^*)\big(\rho(T^{n_k}x,Tx^*)+\rho(x^*,T^{n_k+1}x)\big)\cr\\[-.3cm]
&\leq&a_1(T^{n_k}x,x^*)\rho(T^{n_k}x,x^*)+a_2(T^{n_k}x,x^*)\rho(T^{n_k}x,T^{n_k+1}x)\cr\\[-.3cm]
&\
&+\,a_3(T^{n_k}x,x^*)\big(\rho(x^*,T^{n_k+1}x)+\rho(T^{n_k+1}x,Tx^*)\big)\cr\\[-.3cm]
&\
&+\,a_4(T^{n_k}x,x^*)\big(\rho(T^{n_k}x,T^{n_k+1}x)+\rho(T^{n_k+1}x,Tx^*)+\rho(x^*,T^{n_k+1}x)\big).
\end{eqnarray*}
Therefore,
\begin{eqnarray*}
\rho(T^{n_k+1}x,Tx^*)&\leq&\frac1{1-a_3(T^{n_k}x,x^*)-a_4(T^{n_k}x,x^*)}\Big(a_1(T^{n_k}x,x^*)
\rho(T^{n_k}x,x^*)\cr\\
&\
&+\,\big(a_2(T^{n_k}x,x^*)+a_4(T^{n_k}x,x^*)\big)\rho(T^{n_k}x,T^{n_k+1}x)\cr\\
&\ &+\,
\big(a_3(T^{n_k}x,x^*)+a_4(T^{n_k}x,x^*)\big)\rho(T^{n_k+1}x,x^*)\Big)\cr\\
&\leq&\frac1{1-\alpha}\Big(\alpha\rho(T^{n_k}x,x^*)+\alpha\rho(T^{n_k}x,T^{n_k+1}x)
+\alpha\rho(T^{n_k+1}x,x^*)\Big)\cr\\
&=&\frac\alpha{1-\alpha}\Big(\rho(T^{n_k}x,x^*)+\rho(T^{n_k}x,T^{n_k+1}x)
+\rho(T^{n_k+1}x,x^*)\Big).
\end{eqnarray*}
Consequently, from $T^nx\rightarrow x^*$, there exists a $k_0>0$ such that
$$(T^{n_kx},x^*)\in\frac{1-\alpha}{3\alpha}\cdot V,\quad(T^{n_kx},T^{n_k+1}x)\in\frac{1-\alpha}{3\alpha}\cdot V,\quad\mbox{and}\quad(T^{n_k+1x},x^*)\in\frac{1-\alpha}{3\alpha}\cdot V,$$
for all $k\geq k_0$. Therefore,
$$\rho(T^{n_k+1}x,Tx^*)<\frac\alpha{1-\alpha}\Big(\frac{1-\alpha}{3\alpha}
+\frac{1-\alpha}{3\alpha}+\frac{1-\alpha}{3\alpha}\Big)=1\qquad(k\geq k_0),$$ that
is, $(T^{n_k+1}x,Tx^*)\in V$ for all $k\geq k_0$. Since $V\in\mathscr V$ was arbitrary, it is
seen that $T^{n_k+1}\rightarrow Tx^*$. On the other hand, since $T^{n_k+1}\rightarrow x^*$ and  $X$ is
separated, we must have
$x^*=Tx^*$, and therefore  $x^*$ is a fixed point for $T$.\par To see that
$x^*$ is the unique fixed point for $T$ whenever (i) and (ii) are
satisfied, let $y^*\in X$ be a fixed point for $T$. If
$V\in\mathscr V$, then we consider the following two cases to show
that $(x^*,y^*)\in V$:\vspace{2mm}\par {\bf Case 1:
$\boldsymbol{(x^*,y^*)}$ is an edge of $\boldsymbol G$.}\par Let
$\rho$ be the Minkowski's pseudometric of $V$. Take any arbitrary
$\varepsilon>0$ and write $\rho(x^*,y^*)=r$. Then
$(x^*,y^*)\in(r+\varepsilon)V$ and so by (C2) and Lemma
\ref{lem1}, we have
\begin{eqnarray*}
(x^*,y^*)=(Tx^*,Ty^*)&\in&a_1(x^*,y^*)(r+\varepsilon)V\circ
a_2(x^*,y^*)(r+\varepsilon)V\circ a_2(x^*,y^*)(r+\varepsilon)V\cr\\[-.3cm]&\
&\circ\, a_4(x^*,y^*)(r+\varepsilon)V\circ
a_4(x^*,y^*)(r+\varepsilon)V\cr\\[-.3cm]
&\subseteq&\Big(\big(a_1(x^*,y^*)+2a_2(x^*,y^*)+2a_4(x^*,y^*)\big)r\cr\\[-.3cm]
&\ &+\,\big(a_1(x^*,y^*)+2a_2(x^*,y^*)+2a_4(x^*,y^*)\big)\varepsilon\Big)V\cr\\[-.3cm]
&\subseteq&(\alpha r+\alpha\varepsilon)V.
\end{eqnarray*}
Therefore,
$$\rho(x^*,y^*)<\alpha
r+\alpha\varepsilon=\alpha\rho(x^*,y^*)+\alpha\varepsilon.$$ Since
$\varepsilon>0$ was arbitrary, we get
$\rho(x^*,y^*)\leq\alpha\rho(x^*,y^*)$, and since $\alpha<1$, it
follows that $\rho(x^*,y^*)=0$, that is, $(x^*,y^*)\in
V$.\vspace{2mm}\par {\bf Case 2: $\boldsymbol{(x^*,y^*)}$ is not an edge of $\boldsymbol G$.}\par In this case, by (ii),
there exists a $z\in X$ such that $(x^*,z),(y^*,z)\in E(\widetilde
G)$. Pick a $W\in\mathscr V$ such that $W\circ W\subseteq V$ and denote by $\rho'$, the Minkowski's pseudometric of $W$ and let $n\geq1$. By Proposition \ref{tilde}, $T$ preserves the edges of $\widetilde G$. So
$(x^*,T^nz)=(T^nx^*,T^nz)\in E(\widetilde G)$.
Now write
$$\rho'(x^*,T^{n-1}z)=r_1,\quad\rho'(T^{n-1}z,T^nz)=r_2,\quad\mbox{and}\quad\rho'(x^*,T^nz)=r_3.$$
Then, clearly,
$$(x^*,T^{n-1}z)\in(r_1+\varepsilon)V,\quad(x^*,x^*)\in\varepsilon
V,\quad(T^{n-1}z,T^nz)\in(r_2+\varepsilon)V,$$
$$(x^*,T^nz)\in(r_3+\varepsilon)V,\quad\mbox{and}\quad(T^{n-1}z,x^*)\in(r_1+\varepsilon)V.$$
Therefore, from (C2) and Lemma \ref{lem1}, we have
\begin{eqnarray*}
(x^*,T^nz)=(T^nx^*,T^nz)&\in&a_1(x^*,T^{n-1}z)(r_1+\varepsilon)V\circ
a_2(x^*,T^{n-1}z)\varepsilon V\circ
a_2(x^*,T^{n-1}z)(r_2+\varepsilon)V\cr\\[-.3cm]
&\ &\circ\,a_4(x^*,T^{n-1}z)(r_3+\varepsilon)V\circ
a_4(x^*,T^{n-1}z)(r_1+\varepsilon)V\cr\\[-.3cm]
&\subseteq&\Big(\big(a_1(x^*,T^{n-1}z)+a_4(x^*,T^{n-1}z)\big)r_1+a_2(x^*,T^{n-1}z)r_2+a_4(x^*,T^{n-1}z)r_3\cr\\[-.3cm]
&\
&+\,\big(a_1(x^*,T^{n-1}z)+2a_2(x^*,T^{n-1}z)+2a_4(x^*,T^{n-1}z)\big)\varepsilon\Big)V\cr\\[-.3cm]
&\subseteq&\Big(\big(a_1(x^*,T^{n-1}z)+a_4(x^*,T^{n-1}z)\big)r_1\cr\\[-.3cm]
&\
&+\,a_2(x^*,T^{n-1}z)r_2+a_4(x^*,T^{n-1}z)r_3+\alpha\varepsilon\Big)V.
\end{eqnarray*}
Hence by Remark \ref{remark1},
\begin{eqnarray*}
\rho'(x^*,T^nz)&<&\big(a_1(x^*,T^{n-1}z)+a_4(x^*,T^{n-1}z)\big)r_1+a_2(x^*,T^{n-1}z)r_2
+a_4(x^*,T^{n-1}z)r_3+\alpha\varepsilon\cr\\[-.3cm]
&=&\big(a_1(x^*,T^{n-1}z)+a_4(x^*,T^{n-1}z)\big)\rho'(x^*,T^{n-1}z)\cr\\[-.3cm]
&\
&+\,a_2(x^*,T^{n-1}z)\rho'(T^{n-1}z,T^nz)+a_4(x^*,T^{n-1}z)\rho'(x^*,T^nz)
+\alpha\varepsilon.
\end{eqnarray*}
Since $\varepsilon>0$ was arbitrary, we obtain
\begin{eqnarray*}
\rho'(x^*,T^nz)&\leq&\big(a_1(x^*,T^{n-1}z)+a_4(x^*,T^{n-1}z)\big)\rho'(x^*,T^{n-1}z)\cr\\[-.3cm]
&\
&+\,a_2(x^*,T^{n-1}z)\rho'(T^{n-1}z,T^nz)+a_4(x^*,T^{n-1}z)\rho'(x^*,T^nz)\cr\\[-.3cm]
&\leq&\big(a_1(x^*,T^{n-1}z)+a_2(x^*,T^{n-1}z)+a_4(x^*,T^{n-1}z)\big)\rho'(x^*,T^{n-1}z)\cr\\[-.3cm]
&\
&+\,\big(a_2(x^*,T^{n-1}z)+a_4(x^*,T^{n-1}z)\big)\rho'(x^*,T^nz),
\end{eqnarray*}
which accompanied with \eqref{abc} yields
\begin{eqnarray*}
\rho'(x^*,T^nz)&\leq&\frac{a_1(x^*,T^{n-1}z)+a_2(x^*,T^{n-1}z)+a_4(x^*,T^{n-1}z)}
{1-a_2(x^*,T^{n-1}z)-a_4(x^*,T^{n-1}z)}\cdot\rho(x^*,T^{n-1}z)\cr\\
&<&\alpha\rho'(x^*,T^{n-1}z)=\alpha\rho'(T^{n-1}x^*,T^{n-1}z)<\cdots<\alpha^n\rho'(x^*,z).
\end{eqnarray*}
Similarly, one can show that
$$\rho'(y^*,T^nz)\leq\alpha^n\rho'(y^*,z).$$
Now, for sufficiently large $n$, we have
$\alpha^n\rho'(x^*,z)<1$ and $\alpha^n\rho'(y^*,z)<1$, that is,
$(x^*,T^nz),(y^*,T^nz)\in W$. Therefore, $(x^*,y^*)\in W\circ
W\subseteq V$.\vspace{2mm}\par Consequently, in both cases, we
have $(x^*,y^*)\in V$. Since $V\in\mathscr V$ was arbitrary and
$X$ is separated, it follows that $y^*=x^*$.
\end{proof}

Setting $G=G_0$ and $G=G_1$ in Theorem \ref{main}, we get the next results in uniform
spaces and partially ordered uniform spaces, respectively.  Note
that Corollary \ref{cor1} is a generalization of \cite[2.5.
Theorem]{cir}.

\begin{cor}\label{cor1}
Let the uniform space $X$ be sequentially complete and separated
and $T:X\rightarrow X$ be a \'Ciri\'c-contraction. Then for each
$x\in X$, the sequence $\{T^nx\}$ converges to a fixed point of
$T$. Moreover, if $a_2$ and $a_3$ in {\rm(C2)} coincide on
$X\times X$, then this fixed point is unique, i.e., there exists a
unique $x^*\in\fix(T)$ such that $\{T^nx\}$ converges to $x^*$ for
all $x\in X$.
\end{cor}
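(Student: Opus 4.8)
The plan is to derive Corollary \ref{cor1} as the specialization of Theorem \ref{main} to the complete graph $G_0$, for which $E(G_0) = X \times X$. First I would record that with this choice every hypothesis of Theorem \ref{main} concerning the graph is automatic: condition (C1) holds trivially; $X_T = \{x \in X : (x,Tx) \in E(G_0)\} = X$, which is nonempty; Property $(\ast)$ holds trivially, since $(T^n x, x^*) \in E(G_0)$ for every $n$, so one may take the subsequence to be the whole sequence; and condition (ii) of Theorem \ref{main} holds as well (for any $x, y \in X$ pick $z = x$). Thus Theorem \ref{main} applies to the \'Ciri\'c-$G_0$-contraction $T$ and produces a fixed point.

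For the convergence assertion I would reuse the \emph{proof} of Theorem \ref{main} rather than merely its statement. Fix an arbitrary $x \in X$. Since $x \in X_T$, Lemma \ref{lem3} shows that $\{T^n x\}$ is Cauchy, so by sequential completeness it converges to some $x^* \in X$. Because $(T^n x, x^*) \in E(G_0)$ for every $n$, the computation carried out in the proof of Theorem \ref{main} applies with $n_k = k$ and gives $T^{n+1} x \to Tx^*$; since $T^{n+1} x \to x^*$ as well and $X$ is separated, $x^* = Tx^*$. Hence $\{T^n x\}$ converges to a fixed point of $T$ for every $x \in X$.

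Finally, under the extra hypothesis that $a_2$ and $a_3$ coincide on $X \times X$, conditions (i) and (ii) of Theorem \ref{main} are both in force (the latter automatically, as above), so $T$ has a unique fixed point $x^*$; by the previous paragraph each sequence $\{T^n x\}$ converges to some fixed point of $T$, which by uniqueness must be $x^*$. I do not expect a genuine obstacle here: the only point needing attention is that the ``for each $x$'' convergence statement has to be extracted from the construction inside the proof of Theorem \ref{main}, whose statement by itself only records the existence and uniqueness of a fixed point.
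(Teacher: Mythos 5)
Your proposal is correct and follows exactly the paper's route: the paper derives Corollary \ref{cor1} by the one-line remark ``Setting $G=G_0$ in Theorem \ref{main}.'' Your additional observation that the orbit-convergence assertion must be extracted from the \emph{proof} of Theorem \ref{main} (via Lemma \ref{lem3}, completeness, and the argument showing $T^{n+1}x\to Tx^*$ with $n_k=k$), rather than from its bare statement, is a careful and accurate filling-in of a detail the paper glosses over.
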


\begin{cor}
Let $\preceq$ be a partial order on the sequentially complete and
separated uniform space $X$ satisfying the following property:
\begin{description}
\item[\hspace{5mm}] If a nondecreasing sequence $\{x_n\}$
converges to some point $x\in X$, then it contains a subsequence
$\{x_{n_k}\}$ such that $x_{n_k}\preceq x$ for all $k\geq1$.
\end{description}
Then a nondecreasing order \'Ciri\'c-contraction $T:X\rightarrow
X$ has a fixed point if and only if there exists an $x_0\in X$
such that $x_0\preceq Tx_0$. Moreover, this fixed point is unique
if
\begin{description}
\item[\rm i)] the functions $a_2$ and $a_3$ in {\rm(C2)} coincide
on $X\times X$; and \item[\rm ii)] each two elements of $X$ has
either a lower or an upper bound.
\end{description}
\end{cor}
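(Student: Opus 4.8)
The plan is to obtain this corollary by specializing Theorem~\ref{main} to the graph $G_1$ associated with the order $\preceq$, exactly as in the third part of the Example. First I would set $V(G_1)=X$ and $E(G_1)=\{(x,y)\in X\times X:x\preceq y\}$; this $G_1$ has no parallel edges, and reflexivity of $\preceq$ gives $\Delta(X)\subseteq E(G_1)$, so $G_1$ satisfies the standing assumptions imposed on the graph $G$ throughout this section. By definition a nondecreasing order \'Ciri\'c-contraction is precisely a \'Ciri\'c-$G_1$-contraction: condition (C1) for $G_1$ reads $x\preceq y\Rightarrow Tx\preceq Ty$, i.e.\ $T$ is nondecreasing, and condition (C2) is the displayed \'Ciri\'c-type entourage estimate with the same functions $a_1,a_2,a_3,a_4$ subject to \eqref{supremum}.

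Next I would translate the remaining hypotheses into graph language. A sequence $\{x_n\}$ satisfies $(x_n,x_{n+1})\in E(G_1)$ for all $n$ precisely when it is nondecreasing, and $(x_{n_k},x)\in E(G_1)$ means $x_{n_k}\preceq x$; hence the convergence property assumed on $(X,\preceq)$ is literally Property~$(\ast)$ for the graph $G_1$. Similarly $X_T=\{x\in X:(x,Tx)\in E(G_1)\}=\{x\in X:x\preceq Tx\}$, so $X_T\neq\emptyset$ is exactly the requirement that there be an $x_0\in X$ with $x_0\preceq Tx_0$. Since $X$ is sequentially complete and separated, Theorem~\ref{main} applies and immediately yields the first assertion: $T$ has a fixed point if and only if such an $x_0$ exists.

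For the uniqueness clause, condition (i) here is verbatim condition (i) of Theorem~\ref{main}, so it only remains to verify that hypothesis (ii) here implies hypothesis (ii) of the theorem. Because $\widetilde{G_1}$ ignores edge directions, $E(\widetilde{G_1})=\{(x,z):x\preceq z\ \text{or}\ z\preceq x\}$; thus if two elements $x,y\in X$ have a common upper bound $z$ (so $x\preceq z$ and $y\preceq z$) or a common lower bound $z$ (so $z\preceq x$ and $z\preceq y$), then in either case $(x,z),(y,z)\in E(\widetilde{G_1})$, which is exactly what Theorem~\ref{main}(ii) demands. Hence the uniqueness part of Theorem~\ref{main} applies and the fixed point is unique. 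I do not expect any genuine obstacle here: no new estimate has to be carried out, and the only points to keep in mind are this dictionary between the order-theoretic and graph-theoretic formulations and the fact, guaranteed by Proposition~\ref{tilde}, that the passage to $\widetilde{G_1}$ implicit in the uniqueness argument still preserves both (C1) and (C2).
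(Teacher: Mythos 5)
Your proposal is correct and is exactly the paper's route: the corollary is obtained by setting $G=G_1$ with $E(G_1)=\{(x,y):x\preceq y\}$ in Theorem~\ref{main}, and your dictionary (nondecreasing sequences $\leftrightarrow$ Property~$(\ast)$, $x_0\preceq Tx_0\leftrightarrow X_T\neq\emptyset$, common upper/lower bounds $\leftrightarrow$ condition (ii) for $\widetilde{G_1}$) is precisely the translation the paper leaves implicit.
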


Our next result is a generalization of the fixed point theorem for
Hardy and Rogers-type contraction \cite{har} from metric spaces to
uniform spaces endowed with a graph. It also generalizes Banach,
Kannan and Chatterjea contractions provided that $0V=\Delta(X)$.

\begin{cor}
Suppose that the uniform space $X$ is sequentially complete and
separated, and satisfies the following properties:
\begin{itemize}
\item If a sequence $\{x_n\}$ converges to some point $x\in X$ and
it satisfies $(x_n,x_{n+1})\in E(G)$ for all $n\geq1$, then there
exists a subsequence $\{x_{n_k}\}$ of $\{x_n\}$ such that
$(x_{n_k},x)\in E(G)$ for all $k\geq1$; \item For all $x,y\in X$,
there exists a $z\in X$ such that $(x,z),(y,z)\in E(\widetilde
G)$.
\end{itemize}
Let $T:X\rightarrow X$ be an edge preserving self-mapping satisfying
the following contractive condition:
\begin{description}
\item[\hspace{5mm}] For all $x,y\in X$ and all
$V_1,V_2,V_3,V_4,V_5\in\mathscr V$,
$$(x,y)\in E(G)\cap V_1,\ (x,Tx)\in V_2,\ (y,Ty)\in V_3,\ (x,Ty)\in V_4,\ \mbox{and}\ (y,Tx)\in V_5$$ imply
$$(Tx,Ty)\in aV_1\circ bV_2\circ bV_3\circ cV_4\circ cV_5,$$
where $a$, $b$ and $c$ are positive real numbers such that
$a+2b+2c<1$.
\end{description}
Then $T$ has a unique fixed point if and only if
$X_T\neq\emptyset$.
\end{cor}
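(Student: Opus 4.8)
The plan is to obtain this corollary as a direct specialization of Theorem \ref{main} in which the coefficient functions are taken to be constants. First I would put $a_1(x,y)=a$, $a_2(x,y)=a_3(x,y)=b$ and $a_4(x,y)=c$ for all $x,y\in X$. These are positive-valued functions on $X\times X$, and the hypothesis $a+2b+2c<1$ says precisely that
$$\sup\big\{a_1(x,y)+a_2(x,y)+a_3(x,y)+2a_4(x,y):x,y\in X\big\}=a+2b+2c<1,$$
so \eqref{supremum} holds with $\alpha=a+2b+2c$. With this choice, the contractive condition imposed on $T$ in the statement is word for word condition (C2) of Definition \ref{ciric}, and the assumption that $T$ is edge preserving is condition (C1). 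Hence $T$ is a \'Ciri\'c-$G$-contraction.

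Next I would match the remaining hypotheses. The two bulleted properties assumed on $X$ are exactly Property $(\ast)$ and condition (ii) of Theorem \ref{main}, and sequential completeness and separatedness are assumed outright. Condition (i) of Theorem \ref{main}, that $a_2$ and $a_3$ agree on $X\times X$, is automatic here since both equal the constant $b$. Applying Theorem \ref{main} then gives at once that $T$ has a fixed point if and only if $X_T\neq\emptyset$, and that the fixed point is unique. This completes the proof of the corollary.

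As for the closing remark that the result also covers the Hardy--Rogers, Banach, Kannan and Chatterjea contractions when $0V=\Delta(X)$, I would simply note the further degenerate specializations: the displayed condition with $b=c=0$ collapses (using $0V=\Delta(X)$ to absorb the remaining entourages) to $(Tx,Ty)\in aV_1$, the entourage form of a Banach $G$-contraction; with $a=c=0$ it becomes $(Tx,Ty)\in bV_2\circ bV_3$, a Kannan-type condition; and with $a=b=0$ it becomes $(Tx,Ty)\in cV_4\circ cV_5$, a Chatterjea-type condition. In each case the constraint $a+2b+2c<1$ reduces to the classical requirement on the surviving constant. I do not expect a genuine obstacle here, since the content of Theorem \ref{main} is exactly to subsume such linear contractive conditions; the only point requiring a moment's care is bookkeeping, namely that the positivity of $a_1,\dots,a_4$ and the strict inequality in \eqref{supremum} continue to hold — which they do whenever $a,b,c$ are positive with $a+2b+2c<1$, the vanishing-coefficient cases being legitimized precisely by the convention $0V=\Delta(X)$.
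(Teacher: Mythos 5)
Your proposal is correct and is exactly the intended argument: the paper offers no separate proof of this corollary, treating it as the immediate specialization of Theorem \ref{main} obtained by taking $a_1\equiv a$, $a_2\equiv a_3\equiv b$, $a_4\equiv c$, whereupon \eqref{supremum} holds with $\alpha=a+2b+2c<1$, condition (i) is automatic, and the two bulleted hypotheses are Property $(\ast)$ and condition (ii). Your closing remarks on the degenerate Banach/Kannan/Chatterjea cases go beyond what the corollary asserts and are harmless.
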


\begin{rem}\rm In \cite{boj12}, Bojor established some results on  the existence
and uniqueness of fixed points for self-mappings $T$, called \'Ciri\'c-Reich-Rus operators, on a  metric space $X$ endowed with a $T$-connected graph $G$ satisfying
$$d(Tx,Ty)\leq ad(x,y)+bd(x,Tx)+cd(y,Ty)\qquad(x,y\in X),$$
where $a,b,c\geq0$ and $a+b+c<1$. Let us review the notion of $T$-connectedness introduced by Bojor:  Let $X$ be a metric space endowed with a graph $G$ (see, \cite[Section 2]{jac}) and $T$ be a self-mapping on $X$. Then $G$ is said to be
$T$-connected if for all $x,y\in X$ with $(x,y)\notin E(G)$, there
exists a finite sequence $(x_i)_{i=0}^N$ of vertices of $G$ such
that
$$x_0=x,\ x_N=y,\ (x_{i-1},x_i)\in E(G)\ \mbox{for}\ i=1,\ldots,N,\
\mbox{and}\ x_i\in X_T\ \mbox{for}\ i=1,\ldots,N-1.$$ \par Note if the graph
$G$ is $T$-connected and $\preceq$ is the partial order on $X$ induced by $G$, then for all $x,y\in
X$ with $x\npreceq y$, there exists a finite sequence
$(x_i)_{i=0}^N$ of vertices of $G$ such that $x_{i-1}\preceq x_i$
for $i=1,\ldots,N$. Hence by the transitivity of $\preceq$, we get
$x\preceq y$, which is impossible. Therefore, the graph $G$ is
$T$-connected if and only if each two elements of $X$ are
comparable. More generally, if a graph $G$ is transitive, then $G$ is $T$-connected if and only if it is complete, which is a  strong condition.
 Hence, Bojor's work can not generalize the same results from partially
ordered metric spaces to metric spaces endowed with a graph.
\par In this work we do not impose the assumption $T$-connectedness on our results and since the \'Ciri\'c-$G$-contractions given here are generalization of  \'Ciri\'c-Reich-Rus operators, therefore  our results generalize the fixed point results  for \'Ciri\'c-Reich-Rus operators in partially ordered metric and uniform spaces.
\end{rem}

\end{document}